\documentclass[reqno]{amsart}
\allowdisplaybreaks
\usepackage{mathpazo}
\parskip=5pt
\linespread{1.1}
\usepackage{amsmath,amsfonts,amssymb}
\usepackage{amsrefs}
\usepackage{amsthm}
\usepackage{latexsym,amsmath,amssymb,amsfonts}
\usepackage{rotating}
\usepackage{booktabs}
\usepackage{xypic} \xyoption{all}
\usepackage{amscd}
\usepackage{hyperref} 
\usepackage{euscript}
\usepackage{hhline}
\usepackage{cite}
\usepackage{color}
\usepackage{graphicx,epstopdf}
\usepackage{epsfig}
\usepackage{xcolor}
\usepackage[all,color]{xy}
\newlength{\fighskip} \fighskip=2pt
\newlength{\figvskip} \figvskip=3pt
\usepackage{hyperref}
\numberwithin{equation}{section}
\setcounter{tocdepth}{2}

\DeclareMathOperator{\Diff}{Diff}

\theoremstyle{plain}
\newtheorem{theorem}{Theorem}[section]
\newtheorem{definition-theorem}{Theorem/Definition}[section]
\newtheorem{lemma}[theorem]{Lemma}
\newtheorem{lemma-definition}[theorem]{Lemma/Definition}

\newtheorem{conjecture}[theorem]{Conjecture}

\theoremstyle{definition}
\newtheorem{definition}[theorem]{Definition}

\newtheorem{example}[theorem]{Example}
\newtheorem{prop}[theorem]{Proposition}
\theoremstyle{remark}

\allowdisplaybreaks[4]

\begin{document}
\title{A $C^{\infty}$ closing lemma on torus}
\author{Huadi Qu, Zhihong Xia}
\address{Department of Mathematics, Southern University of Science and
  Technology, Shenzhen, China}
\address{Department of Mathematics, Northwestern University, Evanston,
  IL 60208 USA}
\email{11849458@mail.sustech.edu.cn, xia@math.northwestern.edu}
\date{June 15, 2021, version 1.0}

\maketitle

\begin{abstract} Asaoka \& Irie \cite{Irie2016} recently proved a
  $C^{\infty}$ closing lemma of Hamiltonian diffeomorphisms of closed
  surfaces. We reformulated their techniques into a more general
  perturbation lemma for area-preserving diffeomorphism and proved a
  $C^{\infty}$ closing lemma for area-preserving diffeomorphisms on a
  torus $\mathbb{T}^2$ that is isotopic to identity. i.e., we show
  that the set of periodic orbits is dense for a generic
  diffeomorphism isotopic to identity area-preserving diffeomorphism
  on $\mathbb{T}^2$. The main tool is the flux vector of
  area-preserving diffeomorphisms which is, different from Hamiltonian
  cases, non-zero in general.
  \end{abstract}


\section{Introduction}

 The $C^{r}$ closing lemma is one of the fundamental problems in
 dynamical systems. It goes back to Poincar\'e in his study of the
 restricted three body problem. The problem asks whether the set of periodic
 points of a typical symplectic or volume preserving diffeomorphism
 is dense on a compact manifold. Smale \cite{Smale1998} listed the problem as
 one of the mathematical problems for the 21st century at the end of
 the last century.
 
 Let $M$ be a compact manifold, equipped with a symplectic form
 $\omega$, a closed non-degenerate differential 2-form on $M$. If $M$
 is an orientable surface, then we can take $\omega$ to be an area form. Let
 $\Diff^{r}(M,\omega)$ be the set of $C^{r}$ diffeomorphisms on $M$
 that preserves $\omega$, where $r=1, 2, \ldots, \infty$. For
 $f\in\Diff^{r}(M,\omega)$, denote $\mathcal{P}(f)$ as the set of
 periodic points of $f$. A set in a topological space is said to be
 {\it residual}\/ if it is the intersection of countably many open and dense
 subsets. A property for $C^{r}$ diffeomorphisms of M is said to be
 {\it generic}\/ if there is a residual subset $R$ in $\Diff^{r}(M)$ such that
 the property holds for all $f\in R$.  The $C^r$ closing lemma
 conjectures that for generic $C^r$ area-preserving diffeomorphisms on
 compact manifolds, the set of all periodic points is dense.
 
\begin{conjecture}[$C^r$ closing lemma]
  There exists a residual subset $R\subset\Diff^{r}(M,\omega)$ such
  that for all $g\in R$, The set $\mathcal{P}(g)$ is dense in $M$.
\end{conjecture}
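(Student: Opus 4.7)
The plan is to establish the claimed residual subset by adapting the perturbation technique of Asaoka and Irie to the full group of area-preserving diffeomorphisms of $\T^2$ isotopic to the identity, with the flux vector serving as the organizing invariant. Fix a countable basis $\{U_n\}$ of $\T^2$ and let $R_n$ be the set of $f\in\Diff^\infty(\T^2,\omega)$ admitting a periodic orbit that meets $U_n$. Openness of each $R_n$ is routine once one first intersects with the residual subset on which every periodic point is non-degenerate (a Kupka--Smale argument in the symplectic category), so the whole problem reduces to proving density of each $R_n$, i.e.~a perturbation lemma: given $f$, an open set $U\subset\T^2$ and $\epsilon>0$, produce $g$ with $\|g-f\|_{C^\infty}<\epsilon$ possessing a periodic orbit meeting $U$. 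The residual set is then $\bigcap_n R_n$.

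The perturbation lemma itself is the main content. For Hamiltonian $f$, Asaoka and Irie obtain such a $g$ via a Hamiltonian bump supported in a small Darboux disc inside $U$, using embedded contact homology spectral invariants to detect the new closed orbit. To promote this to the area-preserving setting, I would factor $f=\phi_H\circ\tau_v$, where $\tau_v$ is a translation on $\T^2$ representing the flux class $v\in\R^2/\Z^2$ and $\phi_H$ is Hamiltonian, and then perform the Asaoka--Irie perturbation on the factor $\phi_H$ while keeping $\tau_v$ fixed. Since a Hamiltonian bump does not alter the flux, this yields $g=\phi_H'\circ\tau_v$ within $C^\infty$-distance $\epsilon$ of $f$. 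The nontrivial step is to show that the closed orbit produced for the perturbed Hamiltonian factor survives precomposition with $\tau_v$, which requires choosing the support of the bump in a chart adapted to $\tau_v$ and re-reading the spectral estimate in a suitable finite cover on which the translation becomes tractable.

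The main obstacle is the dichotomy between rational and irrational flux. When $v$ is rational, an iterate $f^N$ has zero flux after lifting to a finite cover of $\T^2$, reducing the problem to the Hamiltonian case of \cite{Irie2016} after verifying that a closed orbit of $f^N$ meeting the preimage of $U$ descends to a closed orbit of $f$ meeting $U$ itself. When $v$ is irrational the dynamics carries a persistent drift that no small perturbation can cancel; one must argue on the universal cover that the Asaoka--Irie-type energy/spectral comparison still forces a closed orbit despite this drift, presumably by combining the spectral estimate with rotation-vector control and Poincar\'e recurrence in the direction transverse to $v$. Unifying the two cases into a single density statement that is robust under intersection over the countable family $\{U_n\}$, and doing so while staying inside $\Diff^\infty(\T^2,\omega)$ with the prescribed flux, is what I expect to be the hardest technical step; the remainder of the argument is a standard Baire-category assembly.
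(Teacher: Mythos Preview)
First, the statement in question is labeled a \emph{conjecture} in the paper and is not proved there in general; what the paper actually proves is the special case $M=\T^2$, $r=\infty$, restricted to $\text{Symp}_0^\infty(\T^2,\omega)$. Your proposal also targets only this case, so I compare there.

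Your plan contains two genuine gaps. The factorization $f=\phi_H\circ\tau_v$ with an Asaoka--Irie bump on $\phi_H$ does not work: the ECH argument produces a periodic orbit of the perturbed Hamiltonian $\phi_H'$, not of $\phi_H'\circ\tau_v$, and there is no mechanism by which such an orbit ``survives precomposition with $\tau_v$''---in general it simply does not. More importantly, your flux dichotomy misplaces the difficulty. The irrational case is never needed: the paper first makes an arbitrarily $C^\infty$-small perturbation (a shear supported in a thin annulus around a homology generator) so that the flux vector becomes rational; density of rationals suffices. In the rational case you correctly note that an iterate $f^q$ is Hamiltonian, but you gloss over the real obstruction, which the paper isolates explicitly: a perturbation $g_j$ of $f^q$ produced by Asaoka--Irie need not be of the form $(f_j)^q$ for any $f_j$ close to $f$, so it does not yield a perturbation of $f$ itself. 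The paper's resolution is a new multi-disk perturbation lemma (their Theorem~\ref{thm2.3}): if a Hamiltonian map is perturbed on finitely many disjoint disks with nonzero \emph{total} action, some member of the one-parameter family acquires a periodic orbit meeting the union of those disks. One then perturbs $f$ by a single bump $h_t$ supported in $U_0\subset U$, so that $(h_t\circ f)^q$ differs from the Hamiltonian $f^q$ on the $q$ disjoint disks $U_0,f^{-1}(U_0),\ldots,f^{-(q-1)}(U_0)$, with total action $qA(h_1)\neq 0$; the lemma then gives a periodic point of $(h_t\circ f)^q$, hence of $h_t\circ f$, in $U_0$. This multi-disk lemma is precisely the missing ingredient in your outline.
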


A seemingly weaker, but equivalent statment is the following 
\begin{conjecture}
  For any open subsets $U\subset M$ and $V\subset\Diff^{r}(M,\omega)$,
  there exists $g\in V$ and $x\in U$ such that $x\in\mathcal{P}(g)$.
\end{conjecture}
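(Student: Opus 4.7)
The plan is to prove the statement in the setting of the paper, namely $M = \mathbb{T}^2$ with area form $\omega$, regularity $r = \infty$, and maps in the isotopy-to-identity component of $\Diff^\infty(\mathbb{T}^2,\omega)$. Fix any $f_0 \in V$; the goal is to construct a small perturbation $g$ of $f_0$ lying in $V$ that has a periodic orbit meeting $U$. The perturbation will be a Hamiltonian correction applied near a carefully chosen recurrent point, so that the flux of $g$ equals that of $f_0$ and $g$ remains in the same isotopy class---hence in $V$, provided the correction is small enough.

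By the Poincar\'e recurrence theorem applied to $f_0$ restricted to $U$, almost every $x \in U$ is recurrent. Given any threshold $\delta > 0$ (to be fixed in terms of the size of $V$), I would select such a recurrent $x \in U$ together with an integer $n \geq 1$ satisfying $\mathrm{dist}(f_0^n(x), x) < \delta$. By thinning the sequence of return times one can further arrange that the intermediate iterates $f_0(x), \ldots, f_0^{n-1}(x)$ all lie outside a small open ball $B \subset U$ centered at $x$; this isolation of the return is crucial, so that the subsequent perturbation disturbs only one point of the orbit.

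Next I would construct a Hamiltonian diffeomorphism $h$ of $\mathbb{T}^2$, supported in $B$, with $h(f_0^n(x)) = x$ and $\norm{h - \mathrm{id}}_{C^\infty} < \varepsilon$, where $\varepsilon$ is chosen so that $h \circ f_0 \in V$. Such $h$ exists because $f_0^n(x)$ is close to $x$, so a suitable cutoff of a linear Hamiltonian realizes the prescribed small displacement inside $B$. Setting $g = h \circ f_0$, one has $g \in V$; since $h$ is the identity outside $B$ and $f_0^j(x) \notin B$ for $1 \leq j \leq n-1$, the orbit of $x$ under $g$ coincides with that under $f_0$ up to time $n-1$, giving
\[
  g^n(x) = h\bigl(f_0^n(x)\bigr) = x,
\]
so $x \in \mathcal{P}(g) \cap U$, as required.

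The hard part will be the simultaneous quantitative control: making the return $f_0^n(x)$ arbitrarily close to $x$ while keeping the intermediate iterates out of the small ball $B$. This is delicate when $f_0$ has non-zero flux---the generic situation on $\mathbb{T}^2$ and the new feature compared to the Hamiltonian closing lemma of Asaoka--Irie. To handle it I would write $f_0$ as a drift by its flux vector composed with a zero-flux factor and analyze recurrence in the universal cover, using the flux to control the winding of orbits. A secondary but essential technical point is the perturbation lemma itself, which must produce an area-preserving $h$ with prescribed small displacement and controlled $C^\infty$-norm; this is the reformulation of the Asaoka--Irie local construction promised in the abstract.
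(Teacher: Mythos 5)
Your proposal is the classical Pugh--style closing argument: isolate a close Poincar\'e return in a small ball $B$ and compose with a compactly supported $h$ that pushes $f_0^n(x)$ back to $x$. The step you flag as ``the hard part'' is not merely hard --- it is the well-known obstruction that has kept the $C^r$ closing lemma open for $r\geq 2$ for half a century, and it already fails at your sentence ``a suitable cutoff of a linear Hamiltonian realizes the prescribed small displacement inside $B$.'' If $u=h-\mathrm{id}$ is supported in a ball of radius $\rho$ and $|u(y)|=\delta$ for some $y\in B$, then Taylor's theorem along a radius (all boundary data of $u$ vanish) gives $\sup_B|D^k u|\geq k!\,\delta/\rho^k$; thus $\|h-\mathrm{id}\|_{C^k}<\varepsilon$ forces $\delta\lesssim\varepsilon\rho^k$, and $C^\infty$--smallness forces $\delta$ to be smaller than every power of $\rho$. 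Poincar\'e recurrence furnishes no such relation between the return distance $\delta$ and the largest ball $\rho$ avoided by the intermediate iterates --- generically $\delta$ and $\rho$ are comparable --- and using the flux vector or the universal cover only controls which iterates wind near $x$, not the size of $\delta$ relative to powers of $\rho$. (Pugh's $C^1$ theorem circumvents the $k=1$ case precisely by distributing the perturbation over many time steps; no analogue exists for $k\geq 2$.) Your ``secondary but essential technical point,'' a perturbation lemma producing an area-preserving $h$ with prescribed small displacement and small $C^\infty$-norm, is asking for an object that does not exist, and it is also not what Theorem~\ref{thm2.3} of the paper does. The same obstruction is present for Hamiltonian $f_0$, so this is not a new feature tied to nonzero flux; Asaoka--Irie did not use local pushing either.

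The paper's proof goes an entirely different route and avoids recurrence, close returns, and local pushing altogether. One perturbs $f$ so that its flux vector is rational, $(p_1/q,p_2/q)$; then $f^q$ is Hamiltonian modulo an integral translation in the universal cover. One composes $f$ with an area-preserving $h_t$ supported in $U_0$ whose only required feature is nonzero total action $A(h_1)\neq 0$ (the explicit bump-function family of Section~\ref{section3}); the key observation is that $(h_t\circ f)^q\circ f^{-q}$ is supported in the disjoint union $U_0\cup f^{-1}(U_0)\cup\cdots\cup f^{-(q-1)}(U_0)$ with total action $qA(h_1)\neq 0$. Theorem~\ref{thm2.3} --- the reformulation of Asaoka--Irie resting on ECH spectral invariants and the Cristofaro-Gardiner--Hutchings--Ramos volume theorem --- then gives some $t$ for which $(h_t\circ f)^q$, hence $h_t\circ f$, has a periodic point in $U_0$. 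The periodic point is forced by a global invariant (total action, equivalently contact volume), not by closing a nearly recurrent orbit, and no quantitative control of returns is needed; this is the essential new idea your sketch does not use.
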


To see the equivalence, notice that by the Baire Category Theorem, in
a complete metric space, such is $\Diff^{r}(M,\omega)$, any
intersection of countably many open dense sets is again dense, hence the
residual set $R$ is dense in $\Diff^{r}(M,\omega)$. Therefore any open
$V\subset\Diff^{r}(M,\omega)$ contains an element in
$R$, therefore the second conjecture follows from the first one.

On the other hand, if the second conjecture is true, to find the
residual set $R$ in the first conjecture,
let $\{U_i \; | \; i=1,2, \ldots\}$ be a countable basis of
$M$. For each $i \in \mathbb{N}$, define
$$H_i=\{\psi\in\Diff^{r}(M,\omega)\; | \; \psi  \mbox{ has a
  non-degenerate periodic point in }U_i\}$$
Then $H_i$ is open. By the second conjecture, $H_i$ is also dense in
$\Diff^{r}(M,\omega)$. So $R=\bigcap_{i} H_i$ is a residual subset in
$\Diff^{r}(M,\omega)$ with the desired property. \hfill \qed

\vspace{1em}
The $C^r$ closing lemma conjecture was in essence made by Poincar\'e
\cite{Poincare19931}, and it remains open up to now in general cases. However,  for
$r=1$, i.e. the $C^1$ closing lemma, it has been completely settled, for
symplectic and volume-preserving
difformorphisms of compact manifolds (cf.\ Pugh \cite{Pugh1967}, Liao
\cite{Liao1979}, Pugh and Robinson \cite{Pugh1983}). The $C^r$ closing lemma for $r>1$
has eluded mathematicians for generations. However, important progress
has been made. For example, the Anosov closing
lemma for uniformly hyperbolic difformorphisms (Anosov
\cite{Anosov1962, AnosovZhuzhoma2012}) and
non-uniformly hyperbolic diffeomorphisms (Pesin \cite{Pesin1977}), a class of
partially hyperbolic diffeomorphisms (Xia and Zhang\cite{XiaZhang2006}) and other
cases. For more details, we refer to Saghin and Xia \cite{RaduXia2006}.
 
Following recent exciting development in contact geometry (cf.\
Hutchings\cite{Hutchings2014}\cite{Hutchings2016}), an important and
remarkable progress has been made for $C^r$ closing lemma for
Hamiltonian diffeomorphisms on surfaces. A diffeomorphism is said to
be {\em Hamiltonian}\/ if it is a time-1 map of a 1-periodic
time-dependent Hamiltonian system. Asaoka and Irie \cite{Irie2016}
showed that a generic $C^r$, $r=1, 2, \ldots, \infty$, Hamiltonian
difformorphism of a closed surface, has a dense set of periodic
orbits. This result comes as an interesting application of spectral
invariants in embedded contact homology, known as $ECH$ invariants.

Our goal is to extend these results to a more general area preserving
diffeomorphism that is not coming from periodic Hamiltonian flow. As a
first step in this direction, we restrict ourselves to torus
$\mathbb{T}^2$ and to area preserving diffeophisms that are isotopic to identity.
We show that the $C^\infty$ closing lemma holds in this case. More
precisely, let $\text{Symp}_{0}^{\infty}(\mathbb{T}^2,\omega)$ be the
set of all $C^\infty$ area-preserving diffeomorpism on $\mathbb{T}^2$ that
is isotopic to identity, we have

\begin{theorem}
  There exists a residual subset
  $R\subset\text{Symp}_{0}^{\infty}(\mathbb{T}^2,\omega)$ such that
  for all $g\in R$, the set $\mathcal{P}(g)$ is dense on $\mathbb{T}^2$.
\end{theorem}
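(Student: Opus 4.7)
As established in the introduction, a Baire-category argument reduces the theorem to the following: for every open $U\subset\mathbb{T}^2$ and every non-empty open $V\subset\text{Symp}_0^\infty(\mathbb{T}^2,\omega)$, there exist $g\in V$ and $x\in U$ with $x\in\mathcal{P}(g)$. I would attack this through the flux homomorphism, using it to reduce the problem to the Hamiltonian closing lemma of Asaoka--Irie \cite{Irie2016}.

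Fix $f\in V$. On $\mathbb{T}^2$ the flux takes values in $H^1(\mathbb{T}^2,\mathbb{R})/\Gamma\cong\mathbb{T}^2$, where $\Gamma$ is the flux group, and its kernel is the Hamiltonian subgroup $\text{Ham}(\mathbb{T}^2,\omega)$. Composition with a constant-vector-field translation $\tau_\xi$ shifts the flux by $\xi$, so the flux map is open on $\text{Symp}_0^\infty(\mathbb{T}^2,\omega)$. Hence I may choose $\xi$ arbitrarily small so that $f':=\tau_\xi\circ f\in V$ has rational flux, say of the form $(p_1/q,p_2/q)$ with common denominator $q\geq 1$. Because $\text{Flux}$ is a homomorphism, $\text{Flux}((f')^q)=q\cdot\text{Flux}(f')=0$ in $\mathbb{T}^2$, so the iterate $(f')^q$ lies in $\text{Ham}(\mathbb{T}^2,\omega)$.

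The plan is now to find a small Hamiltonian $H$ such that $g:=f'\circ\phi_H\in V$ has a periodic orbit in $U$. Such a $g$ has the same flux as $f'$, and its $q$-th iterate $g^q$ is a Hamiltonian perturbation of $(f')^q$ whose support sits in the union of the first $q$ iterates of $\text{supp}(H)$ under $f'$. After a further small generic perturbation of $f'$ I may pick a ball $B\subset U$ whose orbit $\{(f')^i(B)\}_{i=0}^{q-1}$ consists of pairwise disjoint sets. Applying the paper's reformulated perturbation lemma---the central technical tool extracted by revisiting the $ECH$ spectral-invariant arguments of \cite{Irie2016}---to the Hamiltonian map $(f')^q$ on the region $B$ then produces an $H$ supported in $B$ of arbitrarily small $C^\infty$-norm such that $g^q$ has a fixed point inside $B$. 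This fixed point is a periodic point of $g$ lying in $U$, as desired.

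The principal obstacle lies in the last step. Even granted the Hamiltonian closing lemma of Asaoka--Irie, it is not automatic that a Hamiltonian perturbation of $(f')^q$ can be realized as the $q$-th iterate of a perturbation $f'\circ\phi_H$ supported in $B\subset U$, nor that the newly created fixed point of $g^q$ actually lies in $U$ rather than in some other piece of the $f'$-orbit of $B$. Securing this controlled, localized perturbation---so that a single Hamiltonian bump applied to $f'$ plants a periodic orbit exactly in $U$---is precisely the role of the general perturbation lemma advertised in the abstract, and proving it is the technical heart of the paper. Once that lemma is in hand, the flux reduction above produces the theorem.
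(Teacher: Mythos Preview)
Your plan is correct and matches the paper's proof almost exactly: reduce by Baire, perturb to rational flux so that a $q$-th iterate is Hamiltonian, observe that Asaoka--Irie applied to $f^q$ does not directly yield a perturbation of $f$, and then invoke the paper's perturbation lemma (Theorem~\ref{thm2.3}) on a small ball whose first $q$ iterates are disjoint. The only cosmetic differences are that the paper rationalizes the flux via the shears of Lemma~3.3 rather than translations $\tau_\xi$, composes on the left ($h_t\circ f$) rather than the right, and concludes by noting that a periodic point of $(h_t\circ f)^q$ in $U_0\cup\cdots\cup U_{q-1}$ necessarily has an $h_t\circ f$-iterate in $U_0$ since $h_t$ is supported there---which resolves exactly the localization worry you flag in your final paragraph.
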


We remark that the set of Hamiltonian diffeomorphisms is a proper
subset of $\text{Symp}_{0}^{\infty}(\mathbb{T}^2,\omega)$,
characterized by zero flux, or preservation of center of gravity in
Arnold's term. There are large classes of area-preserving maps that
are not Hamiltonian.  We will provide more details on this later, but
first,

\begin{example} Using the canonical chart
  $\mathbb{T}^2=\mathbb{R}^2/\mathbb{Z}^2$ and the standard symplectic
  form $\omega=d p\wedge d q$.
Consider on $(\mathbb{T}^2, d p\wedge d q)$ the following simple system:
 \begin{equation*}
 \begin{cases} 
    \dot{p}=& \alpha \\
    \dot{q}=& \beta 
 \end{cases}
\end{equation*}
The flow is area-preserving, but it is Hamiltonian only when $\alpha =
\beta =0$. Similarly, symplectomprphism $f(p,q)=(p+ \alpha,q+\beta )$
is Hamiltonian only if  $(\alpha, \beta) \in\mathbb{Z}^2$.
\end{example}

\

To prove our theorem, we need a more general perturbation technique in
the context of the area-preserving maps. We state this new
perturbation lemma as Theorem \ref{thm2.3}. The advantage of this
theorem is that one can directly verify various conditions
from the area-preserving maps without going through underlying contact
flow. This perturbation lemma can be useful in many other settings.

\section{A review of recent results}
In this section we review Asaoka and Irie's result \cite{Irie2016} for
$C^\infty$ Hamiltonian diffeomorphisms of closed surfaces. The
techniques of their proof is essential for our results.

Consider a compact, oriented surface $M$ and a symplectic form
$\omega$ on it. Let $\Diff^{\infty}(M,\omega)$ denote the group of
$C^{\infty}$ diffeomprphisms on $M$ that preserves $\omega$, equipped
with the $C^{\infty}$ topology. For any time-periodic smooth function
on $M$, $H_t: S^1\times M\to \mathbb{R}$, where
$S^1 = \mathbb{R} / \mathbb{Z}$, the associate Hamiltonian vector
fields is $X_{H_t}$, satisfying
$$\iota_{ X_{H_t}}\omega = \omega(X_{H_{t}}, \cdot ) = -dH_t.$$
We denote with $$\phi ^{t}_H:\mathbb{R}\times M\to M,\;\; 
(t,x) \mapsto \phi^{t}_{H}(x)$$
the flow generated by the vector field $X_{H_t}$.
Notice $\phi ^{t}_H$  is a one-parameter subgroup in the group $\Diff^{\infty}(M,\omega)$. 
$\phi^{1}_H$ is the Poincar\'e map on the cross section $\{0\} \times
M$. The fixed points of $\phi^{1}_H$ correspond exactly to 1-periodic
solutions of the vector field $X_{H_t}$.

A symplectic diffeomorphism $f \in \Diff(M,\omega)$ is said to be {\em
  Hamiltonian} if there is time-periodic Hamiltonian function  $H_t:
S^1\times M\to \mathbb{R}$ such that $f = \phi^{1}_H$. We denote the
set of all Hamiltonian diffeomorphisms by
$\text{Ham}(M,\omega)$. Obviously 
$$\text{Ham}(M,\omega) \subset \Diff(M,\omega), $$
the inclusion is proper for most manifold $M$.

For simplicity, we will restrict ourselves to $C^\infty$
diffeomorphisms. Results for $C^r$ topology, $r\in \mathbb{N}$,
follows. From now on in our notation, both $\Diff(M,\omega)$ and
$\text{Ham}(M,\omega)$ are equipped with $C^{\infty}$ topology on
$\Diff(M)$.

Hamiltonian diffeomorphisms enjoy some fine structures critical
to variational techniques in symplectic geometry. These fine
structures give rise to many inetresting dynamical properties. An
important example is the Arnold conjecture for Hamiltonian
diffeomorphisms. Another good example is the following
Asaoka \& Irie's $C^\infty$ closing lemma for Hamiltonian
diffeomorphisms on compact surfaces \cite{Irie2016}.

\begin{theorem}[Asaoka \& Irie]
  Let $M$ be any closed, oriented surface with an area form $\omega$,
  $g\in\text{Ham}(M,\omega)$. Let $\mathcal{P}(g)$ be the set of all
  periodic points of $g$. For any nonempty open set $U\subset M$,
  there exists a sequence $(g_j)_{j\geqslant1}$ in
  $\text{Ham}(M,\omega)$ such that
  $\mathcal{P}(g_j)\cap U\neq\emptyset$ and
  $\lim_{j \to \infty }{g_j}=g$.
\label{thm2.1}\end{theorem}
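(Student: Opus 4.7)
The plan is to follow the contact-geometric strategy of Irie \cite{Irie2016}: lift the dynamics of $g$ to a three-manifold and apply the embedded contact homology (ECH) spectral invariants of Cristofaro-Gardiner--Hutchings--Ramos. Form the mapping torus
\[
Y_g = (\mathbb{R}\times M)/((t+1,x)\sim(t,g(x))).
\]
Because $g=\phi^1_H$ is Hamiltonian, the isotopy $\{\phi^t_H\}$ equips $Y_g$ with a canonical stable Hamiltonian structure (a contact form after a small perturbation) whose Reeb vector field is $\partial_t+X_{H_t}$. Closed Reeb orbits of period $\leq T$ then correspond bijectively to periodic points of $g$ of period $\leq T$, and the open set $U\subset M$ lifts to an open neighborhood $V\subset Y_g$.

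The crucial input is the Weyl-type asymptotic law for the ECH spectral invariants $c_k(Y_g,\lambda)$:
\[
\lim_{k\to\infty}\frac{c_k(Y_g,\lambda)^2}{k}=2\int_{Y_g}\lambda\wedge d\lambda,
\]
coupled with the spectrality property that each $c_k$ is realized as the total action $\sum_i T_i$ of a finite collection of closed Reeb orbits of $\lambda$. The perturbation mechanism is Irie's rescaling trick: choose a nonnegative bump function $\rho$ supported in $V$, and set $\lambda_s=(1+s\rho)\lambda$ for $s\geq 0$. Then the Reeb vector fields of $\lambda_s$ and $\lambda$ agree outside the support of $\rho$, while $\int_{Y_g}\lambda_s\wedge d\lambda_s > \int_{Y_g}\lambda\wedge d\lambda$ for every $s>0$.

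To prove Theorem \ref{thm2.1} one argues by contradiction. Suppose some $C^\infty$ neighborhood $\mathcal{V}$ of $g$ in $\text{Ham}(M,\omega)$ contains no element with a periodic point in $U$. For $s>0$ sufficiently small, the time-one return map $g_s$ associated to $\lambda_s$ lies in $\mathcal{V}$, so by hypothesis no closed Reeb orbit of $\lambda_s$ enters $V$. Since $\rho$ vanishes off $V$, every closed Reeb orbit of $\lambda_s$ is a closed Reeb orbit of $\lambda$ with the identical $\lambda$-action. By spectrality together with monotonicity of $c_k$ under conformal rescaling, one deduces $c_k(Y_g,\lambda_s)=c_k(Y_g,\lambda)$ for every $k$. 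This contradicts the Weyl formula, since the right-hand side strictly increases with $s$. Running the contradiction over a sequence of shrinking neighborhoods $\mathcal{V}_j$ produces the required sequence $g_j\to g$ with $\mathcal{P}(g_j)\cap U\neq\emptyset$.

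The main obstacle is in fact hidden entirely in the ECH black box: the Weyl law ultimately depends on the Taubes isomorphism between ECH and Seiberg--Witten Floer cohomology, a very deep result. Two auxiliary technical points also need care. First, the ECH spectral invariants are defined for nondegenerate contact forms, so one must approximate $\lambda$ by nondegenerate forms and pass to limits using the $C^0$-continuity of $c_k$ in the contact form. Second, one must verify that the Hamiltonian mapping torus construction genuinely produces a contact form (or a stable Hamiltonian structure to which ECH extends), which is where the Hamiltonian hypothesis on $g$ is essential; indeed, this is exactly the feature that fails for a general element of $\text{Symp}_0^\infty(\mathbb{T}^2,\omega)$ with nonzero flux, and is the principal difficulty that the rest of this paper must resolve.
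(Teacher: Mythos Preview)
Your overall strategy---pass to a three-manifold, invoke ECH spectral invariants and the Cristofaro-Gardiner--Hutchings--Ramos Weyl law, then rescale by a bump function to force a closed orbit through the prescribed open set---is the right philosophy and matches the paper's sketch of Asaoka--Irie. The difference, and the place where your argument has a genuine gap, is in the construction of the three-manifold.

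You work directly on the mapping torus $Y_g$ and assert that the Hamiltonian isotopy equips it with ``a contact form after a small perturbation''. This is not correct: since $\omega$ is not exact on a closed surface, there is no $1$-form $\lambda$ on $Y_g$ with $d\lambda$ restricting to $\omega$ on the fibers, and no small perturbation repairs this. The natural structure on $Y_g$ is only stable Hamiltonian, and the ECH spectral invariants and Weyl law you cite are stated for contact forms. One can instead appeal to periodic Floer homology and its own volume asymptotics, but that is a separate and more recent development, not what the paper (or Asaoka--Irie) invokes. Your final paragraph suggests you believe the Hamiltonian hypothesis is exactly what makes the mapping torus contact; it does not.

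The paper's route around this obstacle is different: use the Arnold conjecture (Floer) to find a fixed point of $g$, blow up $M$ at that point to obtain a surface $\check M$ with boundary on which $\omega$ \emph{is} exact, and then build a genuine closed contact $3$-manifold via an open book decomposition with page $\check M$ and monodromy (the induced map) $\check g$. On that contact manifold Irie's perturbation lemma applies verbatim, and closed Reeb orbits correspond to periodic points of $g$. The Hamiltonian hypothesis enters through the existence of the fixed point and the exactness of $\check g$ needed for the open book, not through any contact property of the raw mapping torus.
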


 The key idea in the proof is to embed the area-preserving
 diffeomorphism on the surface into a three dimensional contact flow,
 where one can apply Irie's perturbation lemma for contact flow
 \cite{Irie2015}.

 Let $(M,\lambda)$ be a contact manifold, where $\lambda$ denotes the
 contact form with the contact distribution
 $\xi_{\lambda}:=ker\lambda.$ The Reeb vector field $R_{\lambda}$ is
 defined by $\lambda(R_{\lambda})=1$, $d\lambda(R_{\lambda},\cdot)
 \equiv 0$. Let
 $\mathcal{P}(M,\lambda)$ denote the set of periodic orbits of
 $R_\lambda$:
 $$\mathcal{P}(M,\lambda): =
 \{\gamma:\mathbb{R}/T_{\gamma} \mathbb{Z}\rightarrow
 M \; |\; T_{\gamma}>0, \; \dot{\gamma} = R_{\lambda}(\gamma)\}.$$
 here $T_{\gamma}$ is the period of orbit $\gamma$ of the vector field
 $R_{\lambda}$.
 
 \begin{lemma}[Irie's perturbation lemma]
Let $(M,\lambda)$ be a closed contact 3-manifold, For any $h\in
C^{\infty}(M,\mathbb{R}_{\geqslant0})\backslash\{0\}$, there exist
$t\in[0,1]$ and $\gamma\in\mathcal{P}(M,(1+th)\lambda)$ which
intersects supp($h$).
\end{lemma}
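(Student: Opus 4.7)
The plan is to argue by contradiction using the spectral invariants coming from embedded contact homology and a Weyl-type volume asymptotic. I would assume the conclusion fails, so that for every $t\in[0,1]$ no closed Reeb orbit of $\lambda_t := (1+th)\lambda$ meets $\operatorname{supp}(h)$, and then derive a contradiction with the volume change forced by multiplying $\lambda$ by $1+h$.

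The first step is to translate the standing assumption into a statement about action spectra. If a Reeb orbit $\gamma$ of $\lambda_t$ avoids $\operatorname{supp}(h)$, then $h\equiv 0$ along $\gamma$, and so its $\lambda_t$-action satisfies
$$\int_\gamma \lambda_t \;=\; \int_\gamma (1+th)\lambda \;=\; \int_\gamma \lambda.$$
Consequently the $\lambda_t$-action spectrum is contained in the $\lambda$-action spectrum, which is a closed subset of $\mathbb{R}_{>0}$ of Lebesgue measure zero, and in particular totally disconnected as a subset of $\mathbb{R}$, independently of $t$.

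The second step is to bring in the ECH spectral invariants $c_k(M,\lambda_t)$, $k\geq 0$. I would quote three properties as black boxes: \emph{(i) spectrality}, that $c_k(M,\lambda_t)$ equals the total $\lambda_t$-action of some finite collection of Reeb orbits of $\lambda_t$; \emph{(ii) continuity}, that $t\mapsto c_k(M,\lambda_t)$ is continuous (from the $C^0$-Lipschitz estimate with respect to multiplicative changes of the contact form); and \emph{(iii) the Weyl law} of Cristofaro-Gardiner--Hutchings--Ramos,
$$\lim_{k\to\infty}\frac{c_k(M,\lambda')^2}{k} \;=\; 2\operatorname{vol}(M,\lambda'),\qquad \operatorname{vol}(M,\lambda'):=\int_M\lambda'\wedge d\lambda'.$$
By (i) and the first step, $c_k(M,\lambda_t)$ takes values in a totally disconnected set not depending on $t$; by (ii) it is continuous in $t\in[0,1]$. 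Since a continuous map from a connected space into a totally disconnected one is constant, $c_k(M,\lambda)=c_k(M,(1+h)\lambda)$ for every $k$, and then (iii) forces $\operatorname{vol}(M,\lambda)=\operatorname{vol}(M,(1+h)\lambda)$. On the other hand, using $\lambda\wedge\lambda=0$,
$$\operatorname{vol}\bigl(M,(1+h)\lambda\bigr)\;=\;\int_M (1+h)\lambda\wedge d\bigl((1+h)\lambda\bigr)\;=\;\int_M (1+h)^2\,\lambda\wedge d\lambda,$$
which strictly exceeds $\int_M\lambda\wedge d\lambda=\operatorname{vol}(M,\lambda)$ because $h\geq 0$ and $h\not\equiv 0$. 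This is the desired contradiction.

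The main obstacle is not the logical skeleton of the argument, which is a short connectedness-plus-monotonicity maneuver, but the ECH inputs (i)--(iii) themselves: their construction and properties are deep results of Hutchings and collaborators, and the honest approach is to cite them rather than attempt to revisit any part of the ECH machinery. The only nontrivial check on our side is that the $\lambda$-action spectrum is totally disconnected in $\mathbb{R}$, which follows from its being a closed set of measure zero.
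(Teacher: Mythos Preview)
The paper does not supply its own proof of this lemma; it merely attributes the result to Irie \cite{Irie2015} and records the one-line heuristic that ECH spectral invariants recover the contact volume, so a volume-increasing perturbation must disturb the periodic-orbit structure. Your proposal is precisely a fleshed-out version of that heuristic and is essentially Irie's original argument, so you are in agreement with what the paper points to.

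One small caution: your spectrality axiom (i) places $c_k$ in the set of \emph{total} actions of finite orbit collections, whereas the measure-zero claim you justify is for the spectrum of individual orbits; finite sums drawn from a null set need not form a null set in general. In Irie's paper this is handled (for nondegenerate forms the action values are discrete, and the degenerate case goes by $C^0$-approximation), so it is not a genuine gap in the strategy, but it is worth stating carefully if you write the argument out in full.
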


Here $\mbox{supp}(h)\subset M$ is the support of $h$.  The proof of
this lemma is based on embedded contact homology (ECH), which is
defined in terms of periodic orbits of the Reeb flow. The key property
here for ECH and its spectral invariants is that they recover the
contact volume (Gardiner, Hutchings and Ramos \cite{Hutchings2015}). This
remarkable result implies that one can perturb the structure of
periodic orbits by simply perturb contact volume. It is to see that
adding $h\lambda$ with $h >0$ to contact form $\lambda$, one can
increase the contact volume.

Irie's generic density theorem for periodic Reeb orbits for contact
flow follows easily from the above perturbation lemma \cite{Irie2015}.

\section{A new perturbation lemma}
\label{section3}

Next, we would like to introduce a perturbation lemma in the context
surface diffeomorphisms, without going through contact flows. We will
use quantities and conditions directly verifiable, and computable, in
area-preserving maps. We will use this perturbation theorem to prove
our main theorem.

Let $U$ be a simply connected region in $M$, homeomorphic to a closed
disk in $\mathbb{R}^2$, with the proper radius such that $U$ has the
same area as the disk. We may assume, for simplicity and without loss
of generality, that $U$ is such a disk. Let $h$ be a $C^\infty$ area-preserving
diffeomorphism of $U$ such that $h$ is identity for all points near
the boundary of $U$.  Since $h$ is area-preserving, the differential
1-form $h^*(ydx) - ydx$ is exact, therefore, there is a function $g_h$
on the disk $U$ such that
$$dg_h = h^*(ydx) - ydx$$
The function $g_h$ is unique up to an integral constant, we fix the
constant such that $g_h=0$ on the boundary of $U$.

The function $g_h$ is called the action for the diffeomorphism $h$.
We define the total action by integrating $g_h$ over $U$:
$$A(h) = \int_U g_h dxdy.$$

Let $\lambda$ be another 1-form such that $d\lambda = \omega$, then
$\lambda -ydx = dS$ for some real valued function $S$ on $U$. Let
$g_h', A'(h)$ be the action function and total action, respectively,
on $U$ corresponding to $\lambda$, with the same property that
$g_h' =0$ on $\partial D$. Then,
$$g_h'(x) - g_h(x) = S(f(x)) - S(x)$$
and $$A'(h) - A(h) = \int_U S(h(x)) \omega- \int_U  h(x) \omega =0.$$
The last equality is due to the fact that $h$ preserves $\omega$. We
conclude that the total action $A(h)$ is independent the choice of
1-form $ydx$.

Let $e: U \rightarrow M$ be an area-preserving embedding of $U$ into
an closed surface $M$. We can extend $h$, by abusing the notation, to
an area-preserving diffeomorphism of $M$, such that $h = \mbox{id}$
outside of $U$. Clearly, $h$ is Hamiltonian and the action $A(h)$ is
independent of the embedding.

We can now state our perturbation lemma. 

\begin{theorem}[A perturbation lemma]\label{thm2.3}
  Let $M$ be a closed, oriented surface with an area-form and let $f$
  be Hamiltonian diffeomorphism on $M$.
  \begin{itemize}
  \item 
  Let $U_1$, $U_2$, $\ldots$,
  $U_k$ be finite disjoint closed disks on $M$.

\item For all $t\in [0, 1]$, Let $h^t_1, h^t_2, \ldots, h^t_k$ be
  families of area-preserving diffeomorphisms supported on $U_1$,
  $U_2$, $\ldots$, $U_k$, respectively. Assume that
  $h^0_1, h^0_2, \ldots, h^0_k$ are all identity maps on respective
  domains.

\item For each $t \in [0, 1]$, let
  $A(h^t_1), A(h^t_2), \ldots, A(h^t_k)$ be their respective total
  actions.

\item Let $U = U_1 \cup U_2 \cup \ldots \cup U_k$, for any
  $t \in [0, 1]$, let
  $$h^t = h^t_1 \circ h^t_2 \circ \ldots \circ h^t_k: U \rightarrow
  U$$ be a family of area-preserving diffeomorphisms.
    
\end{itemize}

Suppose that at $t=1$, $$A(h^1_1) + A(h^1_2) + \ldots +  A(h^1_k) \neq
0$$

Then there exists $t \in [0, 1]$, such that $$h^t \circ f: M
\rightarrow M$$
has a periodic point in $U$.
\label{thm2.3}\end{theorem}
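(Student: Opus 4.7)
The strategy is to invoke the Asaoka--Irie framework: embed the data $(M,f)$ and the perturbation family $h^t$ into a one-parameter family of contact forms on a 3-manifold, so that the conclusion follows from Irie's perturbation lemma for Reeb flows. Since $f$ is Hamiltonian, write $f = \phi^1_H$ for a time-periodic Hamiltonian $H_t$ and form the associated contact 3-manifold $(Y, \lambda_H)$ whose Reeb flow is the suspension of $X_H$. In this model, the closed disks $U_i \subset M$ lift to solid tori $V_i \subset Y$, and periodic Reeb orbits of $\lambda_H$ meeting $V_i$ correspond bijectively to periodic points of $f$ in $U_i$.

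Each $h_i^t$ is a compactly supported area-preserving isotopy of the disk $U_i$ starting at the identity, hence a Hamiltonian isotopy generated by some time-dependent Hamiltonian supported in $U_i$. Consequently $h^t = h_1^t \circ \dots \circ h_k^t$ is Hamiltonian and $h^t \circ f = \phi^1_{H^t}$ for a smooth family $H^t$ with $H^0 = H$ and $H^t - H^0$ supported in $\bigcup_i U_i$. I would then construct a corresponding family of contact forms $\lambda^t$ on $Y$, with $\lambda^0 = \lambda_H$ and $\lambda^t - \lambda^0$ supported in $V = \bigcup_i V_i$, whose Reeb return map to a fiber is exactly $h^t \circ f$.

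The crucial step is to compute the change in contact volume along this isotopy and identify it with the total action. One expects, up to a universal normalization constant,
$$\mathrm{vol}(Y, \lambda^t) - \mathrm{vol}(Y, \lambda^0) \;=\; \sum_{i=1}^{k} A(h_i^t).$$
This is the contact-geometric manifestation of the fact that $A(h_i^t)$, as defined in Section \ref{section3}, is the Calabi invariant of $h_i^t$ on the disk $U_i$, and the Calabi invariant measures the change in contact volume under a disk-supported twist of the monodromy. Granted this identity, the hypothesis at $t=1$ forces $\mathrm{vol}(Y, \lambda^t)$ to change strictly along the deformation, and then the ECH spectral invariant argument used by Irie \cite{Irie2015} and Asaoka--Irie \cite{Irie2016} applies directly: a strict change of contact volume under a perturbation supported in $V$ produces some $t_* \in [0,1]$ at which $\lambda^{t_*}$ has a periodic Reeb orbit meeting $V$. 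Projecting this orbit back to $M$ gives a periodic point of $h^{t_*} \circ f$ in $U$.

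The main obstacle is making the identification of $\mathrm{vol}(Y,\lambda^t) - \mathrm{vol}(Y, \lambda^0)$ with $\sum_i A(h_i^t)$ precise, with the correct signs and constants, while keeping $\lambda^t$ genuinely contact throughout the deformation. The sign of the action sum may force one to run the isotopy backwards or otherwise reduce to the positive-perturbation regime in which Irie's lemma is stated. One must also verify that the primitives used to define each $A(h_i^t)$ on the disjoint disks $U_i$ glue consistently into the global contact data on $Y$; this is where the hypothesis that the $U_i$ are simply connected disks is essential.
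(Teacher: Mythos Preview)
Your outline is essentially the paper's own argument: pass to a contact $3$-manifold built from $(M,f)$, identify the change in contact volume with the total action of the perturbation, and invoke the ECH volume theorem and Irie's perturbation lemma. Two points, however, deserve sharpening.

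First, the sentence ``form the associated contact $3$-manifold $(Y,\lambda_H)$ whose Reeb flow is the suspension of $X_H$'' is where the real work hides. Since $M$ is closed, $\omega$ has no global primitive, so the naive mapping-torus suspension does not carry a contact form with the desired Reeb dynamics. The paper follows Asaoka--Irie here: one first uses the Arnold conjecture (Floer) to find a fixed point of $f$, blows it up to obtain a surface with boundary on which $\omega$ is exact, and then builds an \emph{open book decomposition} whose binding comes from that blow-up. Only after this step does one have a genuine closed contact $3$-manifold on which Irie's lemma applies. Your proposal should make this explicit rather than postulating $(Y,\lambda_H)$.

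Second, the identity $\mathrm{vol}(Y,\lambda^t)-\mathrm{vol}(Y,\lambda^0)=\sum_i A(h_i^t)$ that you flag as the ``main obstacle'' is exactly what the paper isolates as Proposition~\ref{prop2.4}: the total action is additive under composition of exact symplectomorphisms, once the integration constants are matched at a single point. Since the $h_i^t$ have disjoint supports and are the identity outside the $U_i$, this additivity gives $A(h^t)=\sum_i A(h_i^t)$ directly, and the gluing of primitives you worry about is handled by normalizing each $g_{h_i^t}$ to vanish on $\partial U_i$. With the open book in place, this total action is precisely the change in contact volume, and the rest of your argument goes through.
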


The proof of this theorem follows the same idea as in
Asaoka-Irie theorem \cite{Irie2016}. One needs to note that the total volume in
contact manifold is exactly the total action for area-preserving
map. As we mentioned earlier, ECH invariants,
defined by periodic orbits of the map, recover the volume of the
contact manifold. Any change in contact volume, or the total action
for surface diffeomorphisms, forces changes in the set of periodic
points.

We need to be more precise on action function. An area-preserving
diffeomorphism on a surface compact $M$ with area form $\omega$ is
said to be {\em exact}\/ if there is a 1-form $\lambda$ with
$d\lambda = \omega$ such that $f^*\lambda - \lambda = dg$ for some
real valued function $g$ on $M$. This function $g$ is called the {\em
  action}. Once $\lambda$ is fixed, it is defined upto an integration
constant. This constant can be fixed by assigning a spefic value at
any given point. The total action is defined to be
$$A(f) = \int_M g \omega.$$

The total action depends on the choice of 1-form $\lambda$, through
the first cohomology of the manifold $M$.

Fix a 1-form $\lambda$, a useful fact is that the action is additive
for compositions of exact symplectic diffeomorphisms. 

\begin{prop} \label{prop2.4}  Let $f_1$ and $ f_2$ be exact
  symplectic diffeomorphisms of $M$. Let 
$g_1$, $g_2$ and $g_{12}$ be action functions for $f_1$, $f_2$ and $f_2
\circ f_1$ respectively. Suppose there is a point $x_0 \in M$ such that
 $$g_{12}(x_0) = g_1(x_0) + g_2(f_1(x_0)).$$

Then $$A(f_2 \circ f_1) = A(f_1) + A(f_2).$$ 
\end{prop}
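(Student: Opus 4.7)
The plan is to upgrade the pointwise hypothesis at $x_0$ to a global identity $g_{12} = g_1 + g_2 \circ f_1$ on all of $M$, and then integrate against $\omega$, using $f_1^*\omega = \omega$.

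First I would compute the primitive of $(f_2 \circ f_1)^*\lambda - \lambda$ in two pieces, inserting and subtracting $f_1^*\lambda$:
\begin{equation*}
(f_2 \circ f_1)^*\lambda - \lambda
= f_1^*\bigl(f_2^*\lambda - \lambda\bigr) + \bigl(f_1^*\lambda - \lambda\bigr)
= f_1^*(dg_2) + dg_1
= d\bigl(g_1 + g_2\circ f_1\bigr).
\end{equation*}
Comparing with $(f_2 \circ f_1)^*\lambda - \lambda = dg_{12}$ shows that $g_{12} - g_1 - g_2\circ f_1$ is locally constant. The base-point hypothesis $g_{12}(x_0) = g_1(x_0) + g_2(f_1(x_0))$ then pins the constant to zero on the component containing $x_0$, which (in our connected surface setting) is all of $M$. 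This is the only place the pointwise normalization is used, and it is essential because action functions are defined only up to additive constants.

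With $g_{12} = g_1 + g_2\circ f_1$ in hand, the conclusion is a one-line integration:
\begin{equation*}
A(f_2 \circ f_1) = \int_M g_{12}\,\omega = \int_M g_1\,\omega + \int_M (g_2\circ f_1)\,\omega,
\end{equation*}
and the change of variables $\int_M (g_2\circ f_1)\,\omega = \int_M g_2\,f_1^*\omega = \int_M g_2\,\omega$, valid because $f_1$ preserves $\omega$, identifies the right-hand side with $A(f_1) + A(f_2)$.

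There is no real obstacle here; the statement is a bookkeeping lemma for the cocycle identity satisfied by the action under composition. The only subtle point worth flagging in the write-up is the role of the hypothesis at $x_0$: without it, different admissible choices of $g_1, g_2, g_{12}$ (each ambiguous by an additive constant) would introduce a spurious constant $\int_M c\,\omega = c\cdot\mathrm{Vol}(M)$ on the right-hand side, and additivity would fail. Making this compatibility explicit is the entire content of the proposition.
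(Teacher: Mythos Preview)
Your proof is correct and follows essentially the same route as the paper: establish $g_{12} = g_1 + g_2\circ f_1$ by computing $(f_2\circ f_1)^*\lambda - \lambda$ and using the hypothesis at $x_0$ to fix the constant, then integrate using $f_1^*\omega=\omega$. Your telescoping (inserting $f_1^*\lambda$) is in fact slightly more direct than the paper's, which first inserts $f_2^*\lambda$ and then needs a second manipulation to arrive at the same identity.
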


This lemma shows that change in action with a perturbation can be
easily calculated.

\begin{proof} The action function $g_{12}$ is define by
$$dg_{12} = (f_2  \circ f_1)^* \lambda -\lambda,$$
hence,
$$dg_{12} =\left\{ f_1^*(f_2 ^* \lambda) - f_2 ^* \lambda \right\} + \{f_2 ^* \lambda- \lambda\}$$
$$=d\tilde{g}_1 + d g_2$$
for some function $\tilde{g}_1$. In fact  $$d\tilde{g}_1 = f_1^*(f_2 ^* \lambda) - f_2 ^* \lambda =
\{f_1^*(f_2^* \lambda- \lambda) - (f_2 ^* \lambda - \lambda)\} + \{f_1^*\lambda -
\lambda \}$$
$$=\{ f_1^*dg_2- dg_2\}  + dg_1= d(g_2\circ f_1-g_2) + dg_1.$$
Obviously, such $\tilde{g}_1$ exists, we may choose
$$\tilde{g}_1 = (g_2\circ f_1 -g_2) + g_1,$$
and then we may choose  $$g_{12} = \tilde{g}_1 + g_2 = (g_2\circ f_1 -g_2) + g_1 +
g_2 =  g_2\circ f_1  + g_1,$$
i.e., we may choose the action function for $f_2\circ f_1$ by adding
the action function for $f_1$ and the $f_1$-shifted action function for $f_2$. 
Under this choice, we have \begin{eqnarray} {g}_{12}(x_0) =  g_1(x_0) +
g_2(f_1(x_0)) \label{eq1}. \end{eqnarray}

Now \begin{eqnarray} A(f_2 \circ f_1) &=& \int_M g_{12} \omega \nonumber \\
&=& \int_M (g_1 + g_2\circ f_1)
\omega \nonumber \\
&=& \int_M g_1 \omega + \int_M (g_2\circ f_1) 
\omega \nonumber \\
&=& \int_M g_1 \omega + \int_M g_2 (f_1^* 
\omega) \nonumber \\
&=& \int_M g_1 \omega + \int_M g_2
\omega \nonumber \\
&=& A(f_1) + A(f_2). \nonumber \end{eqnarray}
We remark that the above equality is independent of the choice of
1-form $\lambda$ and therefore the choices of
the action functions, as long as condition (\ref{eq1}) holds.

This proves the proposition.
\end{proof}

There is one issue concerning the action function: it is not well
defined for symplectic diffeomorphisms on compact manifold without
boundary.  Hamiltonian diffeomorphisms are exact provided that
$\omega$ has a primitive 1-form $\lambda$. It is well-known that no
such 1-form exists for compact symplectic manifold without boundary.

To overcome this difficulty, Asaoka-Irie's idea is to blow-up a fixed
point on compact surface. The existence of such fixed point is
guaranteed by Floer's proof of Arnold conjecture.  Moreover, in order
to fit into the contact manifold framework, one needs to construct the
so-called {\em open book decomposition}\/.

Theorem \ref{thm2.3} follows from Asaoka-Irie's open book
decomposition, Proposition \ref{prop2.4}, and, of course, the volume
theorem of Gardiner, Hutchings and Ramos \cite{Hutchings2015}.

We conclude this section by showing how a local perturbation can
change action.

\begin{example}
 Let $D^2$ be a closed unit disk in $\mathbb{R}^2$, we construct a
 family of $C^\infty$ area-preserving map $h_t$ on $D^2$, such that
 \begin{itemize}
 \item $h_{0}=id$;
   \item The action function $g_t \equiv 0$ on $\partial D^2$ for all $t \in
     [0, 1]$;
     \item $A(h_{1})\neq 0$.    
     \end{itemize}
     
 Let $b: [0,1] \rightarrow \mathbb{R}$ be a $C^\infty$ bump
 function, such that
$$b(r) = \left\{ \begin{matrix} 1, & 0 \leq r \leq  \frac{1}{3}   \cr
     0,  & \frac{2}{3} \leq r \leq  1 \end{matrix} \right.$$
and $b(r) \geq 0$, $b'(r) \leq 0$ for  all $r\in[0,1]$.

 Take the standard area form $\omega$. Under the polar coordinate
 $(r,\theta)$ on $D^2$,
 $\omega=rdrd\theta$, we take $\lambda =\frac{r^2}{2}d\theta$. Define a family of
 diffeomorphisms $h_{t}(r,\theta)=(r,\theta+tb(r))$. Then we have
$$h_{t}^{*}\beta-\beta=\frac{r^2}{2}d(\theta+tb(r))-\frac{r^2}{2}d\theta
                                =\frac{r^2}{2}tb'(r)dr=dg_{h_{t}}.$$
                               
Hence the action function is a path independent line integral
                               $$g_t=g_t(r, \theta)=\int_{(1, 0)}^{(r,
  \theta)}\frac{r^2}{2}tb'(r)dr +g(1, 0) = t\int_1^{r}\frac{r^2}{2}b'(r)dr.$$
 Here we set $g(1, 0) = 0$. Finally we have $g(r, \theta) \geq 0$ for all $(r,
 \theta)$ and the total action
$$A(h_t) = t\int_{r=0}^1 \int_{\theta =0}^{2\pi} \big( \int_0^r
\frac{s^2}{2} b'(s) ds\big) r d\theta dr \geq 0.$$
In particular, $A(h_0)=0$ and $A(h_t) > 0$ for any $t > 0$.
                             
\end{example}

\section{Flux}

Our goal is to generalize the $C^\infty$ closing lemma to certain
non-Hamiltonian area-preserving diffeomorphisms. The key property that
distinguishes a Hamiltonian diffeomorphism from a general symplectic
deffeomorphism involves the concept of flux. We first give a general
definition of flux, even though the flux in our specific application
is much simpler.

By a theorem of Weinstein in \cite{Weinstein1971}, the Lie group $Symp(M,\omega)$,
for compact symplectic manifold M, is locally path-connected.  Let
$Symp_{0}(M,\omega)$ denote the path component of the Lie group of
symplectomorphisms of $(M,\omega)$ containing the identity, for every
$\psi\in Symp_{0}(M,\omega)$ there exists a smooth family of
symplectomorphisms $\psi_t\in Symp(M,\omega)$ such that $\psi_{0}=id$
and $\psi_{1}=\psi.$ For such a family of symplectomorphisms there
exists a unique family of vector fields $X_t:M\rightarrow TM$ such
that
$$\frac{d}{dt}\psi_{t}=X_{t}\circ\psi_{t}.$$  
Since $\psi_t$ preserving the symplectic structure we have $L_{X_t}\omega=0$ hence 
$\iota(X_t)\omega=\lambda_t$ is a family of closed 1-forms.

First consider a closed loop $\{\phi_t\}$ in $Symp_{0}(M,\omega)$ with
$\phi_{0}=\phi_{1}=id$, let $\{\lambda_{t}\}$ be the family of closed
1-forms generationg this loop.
\begin{definition}
  The flux of this loop is given by
$$flux(\{\phi_t\})=\int_{0}^{1}[\lambda_t]dt\in H^{1}(M,\mathbb{R}).$$
\end{definition}

Under the usual identification of $H^{1}(M;\mathbb{R})$ with
$Hom(\pi_{1}(M);\mathbb{R})$, the above cohomology class corresponds
to the homomorphism $\pi_{1}(M)\rightarrow\mathbb{R}$ defined by
$$l\mapsto\int_{0}^{1}\int_{0}^{1}\omega(X_{t}(\l(s)),\dot{l}(s))dsdt $$
for $l: S^{1}=\mathbb{R}/\mathbb{Z}\rightarrow M\in\pi_{1}(M).$ Use this fact we
can prove that the right hand side only depends on the homotopy class
of the path $\phi_{t}$ in $\pi_{1}(Symp_{0}(M,\omega))$.  Thus we get
a homomorphism
$$flux:\pi_{1}(Symp_{0}(M,\omega))\rightarrow H^{1}(M,\mathbb{R})$$
The image of this homomorphism $\Gamma\subset H^{1}(M,\mathbb{R})$ is
called the \textbf{Calabi group}, or \textbf{flux group}.

Next, given any $\psi\in Symp_{0}(M,\omega)$, any path  $\{\psi_t\}\in Symp_{0}(M,\omega)$
that connected $\psi$ and identity also determined a family of vector fields and a family of closed
1-forms. Then we can define its flux in the same way
$$flux(\{\psi_t\})=\int_{0}^{1}[\lambda_t]dt\in H^{1}(M,\mathbb{R}).$$
notice that the result will depend on the choice of homotopy type of the path connecting
$id$ with $\psi$, but the difference between two choices belongs to $\Gamma$. 
Hence we have
$$flux: Symp_{0}(M,\omega)\rightarrow H^{1}(M,\mathbb{R})/\Gamma.$$

We give a geometric description of the flux homomorphism. The value of
$flux(\{\psi_t\})$ on a 1-cycle $l$ on M is the symplectic area swept
out by the path of $l$ under the isotopy $\{\psi_t\}$. More precisely,
let $\partial[l]=\cup_{t}\psi_{t}(l)$ be a 2-cycle which is the image
of $l$ under $\psi_t$, then
$(flux(\{\psi_t\}),[l])=([\omega],\partial[l])$ for all
$l\in H_{1}(M,\mathbb{Z}).$

Hence the flux group is a subgroup of $H^{1}(M;P_{\omega})$, where
$P_{\omega}:=([\omega], H_{2}(M;\mathbb{Z}))$ is the
countable group of periods of $\omega$, i.e., the set of values taken
by $\omega$ on the countable group $H_{2}(M;\mathbb{Z})$ of integral 2-cycles.

The flux homomorphism for a symplectic manifold $(M,\omega)$ was first
introduced by Calabi in \cite{Calabi1970} in 1970, and studied futher by
Banyaga in \cite{Banyaga1978}. Banyage showed that $ker(F)$ is exactly
$Ham(M,\omega)$, thus we have an exact sequence 
$$0\rightarrow Ham(M,\omega)\rightarrow Symp_{0}(M,\omega) \rightarrow
H^{1}(M,\omega)/\Gamma\rightarrow0.$$ 

That is, we have the following lemma:
\begin{lemma}
  For $\psi\in Symp_{0}(M,\omega)$, it is Hamiltonian if and only if
  there exists a symplectic isotopy
$$[0,1]\rightarrow Symp_{0}(M,\omega): t\mapsto\psi_t $$
such that
$\psi_{0}=id, \psi_{1}=\psi, and flux(\{\psi_{t}\})=0\text{ in
}H^{1}(M,\mathbb{R}).$

Moreover, if $flux(\{\psi_{t}\})=0$ then $\psi_t$ is isotopic with fixed endpoints to a Hamiltonian isotopy.
\end{lemma}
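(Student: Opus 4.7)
Assume $\psi$ is Hamiltonian, so $\psi = \phi_H^1$ for some time-dependent $H_t$. I would simply take $\psi_t := \phi_H^t$ as the required isotopy. The generating vector field is $X_{H_t}$ with $\iota_{X_{H_t}}\omega = -dH_t$, which is exact. Hence $[\lambda_t] = 0 \in H^1(M,\mathbb{R})$ for every $t$, and therefore $\mathrm{flux}(\{\psi_t\}) = \int_0^1 [\lambda_t]\,dt = 0$.

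\textbf{Reverse direction, overall strategy.} Given a symplectic isotopy $\psi_t$ from $\mathrm{id}$ to $\psi$ with $\mathrm{flux}(\{\psi_t\}) = 0$, I would prove the ``moreover'' statement directly, i.e., deform $\{\psi_t\}$ rel endpoints to a Hamiltonian isotopy; the existence of such a deformation immediately exhibits $\psi$ as Hamiltonian. Let $X_t$ be the generator of $\psi_t$ and $\lambda_t := \iota(X_t)\omega$; by hypothesis, $\int_0^1 [\lambda_t]\,dt = 0$ in $H^1(M,\mathbb{R})$. The goal is to produce a smooth 2-parameter family $\psi_t^s$ ($s,t \in [0,1]$) with $\psi_t^0 = \psi_t$, $\psi_0^s = \mathrm{id}$, $\psi_1^s = \psi$, and such that the generator of $s \mapsto \psi_t^1$ has exact $\iota(\cdot)\omega$ for every $t$.

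\textbf{Construction of the deformation.} The plan is to modify $\lambda_t$ through a family $\lambda_t^s$ of closed 1-forms, $s\in[0,1]$, such that $\lambda_t^0 = \lambda_t$, each $\lambda_t^1$ is exact, and the cohomology class of $\int_0^1 \lambda_t^s\,dt$ remains $0 \in H^1(M,\mathbb{R})$ throughout the deformation (ensuring the endpoints of the isotopy are not changed up to a Hamiltonian adjustment). Concretely, fix a Riemannian metric and let $\beta_t$ denote the harmonic representative of $[\lambda_t]$; since the path $t \mapsto [\lambda_t]$ in $H^1(M,\mathbb{R})$ has vanishing integral, the harmonic forms $\beta_t$ can be chosen smoothly with $\int_0^1 \beta_t\,dt = 0$ as a harmonic 1-form (not merely exact). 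Setting $\lambda_t^s := \lambda_t - s\beta_t$ gives a family of closed 1-forms whose cohomology class scales linearly in $s$ to $0$, and whose value at $s=1$ equals $\lambda_t - \beta_t$, which is exact for every $t$. I would then integrate the time-dependent vector fields $X_t^s$ defined by $\iota(X_t^s)\omega = \lambda_t^s$ (with appropriate initial conditions) to obtain the required 2-parameter family of isotopies $\psi_t^s$; the cocycle identity $\widetilde{X} = X + \psi_*Y$ when composing isotopies, together with the fact that $\psi_t$ acts trivially on $H^1$, ensures that the endpoint $\psi_1^s$ remains equal to $\psi$ up to a Hamiltonian correction at each $s$, which one then absorbs into $\psi_t^s$ by reparametrization.

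\textbf{Main obstacle.} The delicate point is maintaining \emph{both} boundary conditions $\psi_0^s = \mathrm{id}$ and $\psi_1^s = \psi$ throughout the deformation while also ensuring that the cohomology classes of the generators vanish at every time $t$ and not just on average. The harmonic-projection trick gives vanishing of the average, but to push the cohomology classes to zero pointwise in $t$ one has to further modify the isotopy by composing with a loop at $\mathrm{id}$ whose flux cancels the running integral $\int_0^t [\lambda_s]\,ds$; the nontrivial fact, which is essentially Banyaga's theorem, is that this loop can indeed be chosen inside $\mathrm{Symp}_0(M,\omega)$ because its total flux (over $t\in[0,1]$) is zero by hypothesis, so no obstruction lies in the flux group $\Gamma$. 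Once this modification is carried out, $\psi_t^1$ is a Hamiltonian isotopy from $\mathrm{id}$ to $\psi$, establishing both the ``if'' direction and the ``moreover'' clause.
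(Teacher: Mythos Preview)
The paper does not give a proof of this lemma at all; immediately after the statement it simply refers the reader to Banyaga~\cite{Banyaga1978}. So your proposal already goes well beyond what the paper does.

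Your forward direction is correct and completely standard. Your reverse-direction sketch is the right strategy and is essentially Banyaga's argument: subtract harmonic representatives $\beta_t$ from the generating forms $\lambda_t$ so that at $s=1$ the generators are exact. You also correctly identify the crux, namely keeping both endpoints fixed through the deformation. One caution: in your ``main obstacle'' paragraph you invoke ``essentially Banyaga's theorem'' to justify the correcting loop, which is circular, since this lemma \emph{is} Banyaga's theorem. The non-circular fact that makes your endpoint claim work is that the Lie bracket of two symplectic vector fields is always Hamiltonian (indeed $\iota_{[X,Y]}\omega = d\,\omega(Y,X)$). Using this, if $\psi_t^s$ is the flow of $X_t^s$ (with $\iota_{X_t^s}\omega=\lambda_t-s\beta_t$) from the identity, and $U_t^s$ is the $s$-generator of $\psi_t^s$, then the $t$-evolution of $[\iota_{U_t^s}\omega]$ is governed only by $-[\beta_t]$ (the bracket term is exact), so $[\iota_{U_1^s}\omega]=-\int_0^1[\lambda_t]\,dt=0$. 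Hence $s\mapsto\psi_1^s$ really is a Hamiltonian path based at $\psi$, and the correction you describe can be carried out without circularity. With that ingredient made explicit, your outline is a valid proof.
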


This lemma states that zero flux is neccesary, but not sufficient, for a
symplectic diffeomorphism to be Hamiltonian. The sufficient condition
is that there is a zero flux isotopy.  For a proof of this lemma, see
Banyaga \cite{Banyaga1978}.

We now consider two dimensional case, where the group of
symplectomorphisms are those area-preaserving diffomorphisms that
preserve the orientation. We normalize the area form $\omega$ by
assume that $\int_{M}\omega=1$.  In this case the flux group is
exactly $\mathbb{Z}$, since $P_{\omega}$, the set of values taken by
$\omega$ on the countable group $H_{2}(M;\mathbb{Z})$ of integral
2-cycles is exactly $\mathbb{Z}$. Hence we can simply define the flux
of a diffeomorphism $f\in\Diff_{0}(M,\omega)$ in the following way.

Let $l$ be an oriented closed curve in $M$, then there is an oriented
disk $D\subset M$ such that the boundary $\partial D=f(l)-l$. The flux
of $f$ across $l$ can be calculate as
$$F_{f}(l)=\int_{D}\omega \ \ mod\ 1.$$
Note that the choice of $D$ is up to the second homology class of $M$,
the above quantity is defined up to mod 1, therefore independent of
the choice of $D$. When $f$ is isotopic to identity, $D$ can be
uniquely defined by the isotopy.

As we see before, the flux depends only on the homology class of $l$,
hence $F_{f}: H_{1}(M,\mathbb{R})\to\mathbb{R}\ \ \ mod\ 1 $ can be
represented by a cohomology vector.

Consider an orientable compact surface $M$ of genus g with $g\geqslant
1$. Let $(a_i,b_i), i=1,2,...g$ be the canonical generators of its
first homology $H_{1}(M_g,\mathbb{R})$,  let
$V_{f}=(F_{f}(a_1),F_{f}(b_1),F_{f}(a_2),F_{f}(b_2),...F_{f}(a_g),F_{f}(b_g))$
be the associate vector in $H^{1}(M,\mathbb{R})$. We call $V_f$ the
{\em flux vector}\/ of $f$.  The flux vector has a nice additive property under the iterations of $f$, i.e.,  $F_{f^k}=kF_{f}$.  

The key fact about flux vector is the following lemma:

\begin{lemma}
Let $f\in\Diff_{0}(M,\omega)$ be an area preserving diffeomorphism
that isotopic to identity, then for any neighborhood $V$ of $f$ in
there is a map $g$ in $V$ such that the flux vector $v_g$ is
rational.
\end{lemma}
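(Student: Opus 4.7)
The plan is to exploit the fact that the flux map $\mathrm{flux}:\Diff_0(M,\omega)\to H^1(M,\mathbb{R})/\Gamma$ is a group homomorphism, together with density of $\mathbb{Q}^{2g}$ in $\mathbb{R}^{2g}$, to reduce the problem to realizing an arbitrarily small prescribed flux by a $C^\infty$-small area-preserving diffeomorphism. Given such a realization $\phi$, setting $g=\phi\circ f$ would then yield a rational flux vector via additivity, while $g$ remains in $V$ by continuity of composition.

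First, I would choose a rational vector $w\in\mathbb{Q}^{2g}$ with $\|w-V_f\|$ as small as desired. Second, I would construct a diffeomorphism $\phi$, $C^\infty$-close to the identity, whose flux vector equals $w-V_f$. On the torus $\mathbb{T}^2$, this is explicit: the translation $\phi_{(\alpha,\beta)}(p,q)=(p+\alpha,q+\beta)$ preserves $dp\wedge dq$, and its flux vector is $(\alpha,\beta)$ (up to the chosen sign convention) since the 2-cycle swept by a canonical loop under the linear isotopy $t\mapsto\phi_{(t\alpha,t\beta)}$ is a parallelogram whose $\omega$-area is read off directly. Taking $(\alpha,\beta)=w-V_f$ gives a map as close to $\mathrm{id}$ as we wish. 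For a general orientable surface of genus $g\geq 1$, one builds $\phi$ as a composition of $2g$ localized perturbations: for each basis cycle $a_i$ or $b_i$, take a symplectic vector field supported in a thin tubular annulus around a dual curve, and let $\phi$ be the time-one flow with a small coefficient tuned so the induced flux lies in the prescribed coordinate.

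Third, additivity of flux under composition follows directly from the definition: if $\{f_t\}$ is an isotopy from $\mathrm{id}$ to $f$ and $\{\phi_t\}$ an isotopy from $\mathrm{id}$ to $\phi$, then the concatenation of $\{f_t\}$ with $\{\phi_t\circ f\}$ is an isotopy from $\mathrm{id}$ to $\phi\circ f$, and the integral of $[\lambda_t]\,dt$ splits as the sum of the two contributions (the second using that right multiplication by $f$ preserves the closed 1-forms up to an exact term, which vanishes in cohomology). Hence $V_g = V_\phi + V_f = w\in\mathbb{Q}^{2g}$. Continuity of composition in the $C^\infty$ topology ensures $g=\phi\circ f\in V$ whenever $\phi$ is close enough to $\mathrm{id}$, which is arranged by taking $\|w-V_f\|$ sufficiently small.

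The main obstacle is step two: the explicit realization of a small cohomology class as the flux of a $C^\infty$-small area-preserving diffeomorphism. On $\mathbb{T}^2$—the case actually needed for the paper's main theorem—translations handle this in one line, and the lemma becomes almost immediate. For higher genus, the genuine work lies in localizing each basis contribution and verifying the sizes of the local perturbations scale linearly with the desired flux coordinates, so that the total perturbation stays small in $C^\infty$. Once that is done, the homomorphism property does the rest.
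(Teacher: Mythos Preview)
Your proposal is correct and follows the same strategy as the paper: realize a small prescribed flux by a $C^\infty$-small area-preserving diffeomorphism and invoke additivity of flux under composition to hit a nearby rational vector. The paper's construction is precisely your higher-genus one---a shear $( \theta,t)\mapsto(\theta+\varepsilon\beta(t),t)$ supported in a tubular neighborhood of each dual curve, adjusting one flux coordinate at a time---while your use of global translations on $\mathbb{T}^2$ is a cleaner shortcut for that case which the paper does not exploit.
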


\begin{proof}
Let $(a_i,b_i)$ be the canonical generators of the first homology
$H_{1}(M,\mathbb{Z})$. For any $i=1,2,...g$, we may assume that
$a_i$ and $b_i$ are simple closed curves such that these curves don't
intersect each other except $a_i$ and $b_i$; $a_i$ and $b_i$ intersect
at only one point and the intersection is transversal.

Fix $i$, we choice a small tubular neighborhood of $b_i$, without loss
of generality, we may parametrize this tubular neighborhood by an
area-preserving parametrization: 
$$\delta_{b_i}: S^{1}\times[-\delta,\delta]\rightarrow M$$ 

Let  $\beta:\
[-\delta,\delta]\rightarrow\mathbb{R}$ be a $C^{\infty}$
function such that $\beta(-\delta)=\beta(\delta)=0,\ \beta(t)>0$ for
all $-\delta<t<\delta$, and all the derivatives of $\beta(t)$ at
$\pm\delta$ are zero. Let
$$T_{\varepsilon}: S^{1} \times [-\delta,\delta] \rightarrow M, \;\;
(\theta,t) \mapsto (\theta+\varepsilon\beta(t))$$ be a small
perturbation on the tubular neighborhood. 

Let $h_{\varepsilon}: M \rightarrow M$ be a diffeomorphism
defined as 
$$h_{\varepsilon}(z) =
\begin{cases} z & z\notin \delta_{b_i} \\
\delta_{b_i} \circ T_{\varepsilon}\circ(\delta_{b_i})^{-1}(z) & z\in\delta_{b_i}
\end{cases}$$ 
Then $h_{\varepsilon} \rightarrow Id_M$ in $C^{\infty}$
topology as $\varepsilon \rightarrow 0$, hence
$f\circ h_{\varepsilon}\rightarrow f$ as $\varepsilon\rightarrow0$.

It is easy to see that the flux of $h_{\varepsilon}$ across $a_j$ and
$b_j$ are all zero for $j\neq i$ and the flux across $b_i$ is also
zero. We have infinitely many choices of $\epsilon$ in
$F_{h_{\varepsilon}}(a_i)\neq0$ such that
$F_{h_{\varepsilon}}(a_i)\neq0$, hence  
$F_{f\circ h_{\varepsilon}}(a_i)=F_{f}(a_i)+F_{h_{\varepsilon}} (a_i)$ could be choice to be rational.

We can do similar perturbations to every closed curves $a_i$ and $b_i$
so that the flux across $a_i$ and $b_i$ are all rational.
\end{proof}

\section{proof of the main result}

In this section we consider
$f\in \Diff ^{\infty}_{0}(\mathbb{T}^{2},\omega)$, the area-preserving
difformorphism of $\mathbb{T}^{2}$ that homotopic to the identity.
Identify $\mathbb{T}^{2}$ with the quotient
$\mathbb{R}^{2}/\mathbb{Z}^{2}$, and let
$\pi: \mathbb{R}^{2}\rightarrow\mathbb{T}^{2}$ be the associated
covering map.  For each continuous map
$f:\mathbb{T}^{2}\rightarrow\mathbb{T}^{2}$ we can lift it to
$\mathbb{R}^2$, to a continuous map
$\widetilde{f}:\mathbb{R}^{2}\rightarrow\mathbb{R}^{2}$ such that
$f\circ\pi=\pi\circ\widetilde{f}$.  If $\widetilde{f_1}$,
$\widetilde{f_2}$ are two lifts of $f$ then there exists
$z\in\mathbb{Z}^{2}$ such that
$\widetilde{f_1}(v)=\widetilde{f_2}(v)+z$ for all
$v\in\mathbb{R}^{2}$.

 Any diffeomorphism $f:\mathbb{T}^{2}\rightarrow\mathbb{T}^{2}$ that is isotopic to identity can be written in the form
$$f(x,y)=(x+\alpha(x,y)),y+\beta(x,y))$$
where the functions $\alpha$ and $\beta$ are of period 1 in both
variables $x,y\in\mathbb{R}$. For this type of diffeomorphisms there exist a lift 
$$\widetilde{f}(x,y)=(x+\widetilde{\alpha}(x,y),y+\widetilde{\beta}(x,y)).$$
such that
$\widetilde{f}(x+z_{1},y+z_{2})=\widetilde{f}(x,y)+(z_{1},z_{2})\text{
  for }z=(z_{1},z_{2})\in\mathbb{Z}^2.$

 \begin{lemma}
   Let $f_{t}\in Symp_{0}({\mathbb{T}^2},\omega)$ be a symplectic
   isotopy with a lift
   $\widetilde{f_t}: \mathbb{R}^2\rightarrow\mathbb{R}^2$ such that
 $$\widetilde{f_t}(x+z_1,y+z_2)=\widetilde{f_t}(x,y)+(z_1,z_2)$$ 
 for $(x,y)\in\mathbb{R}^2$ and $(z_1,z_2)\in\mathbb{Z}^2$. Then 
 $$Flux(\{\widetilde{f_t}\}) = a_1dx+a_2dy, (a_1,a_2) =
 J_{0}\int_{\mathbb{T}^2}(\widetilde{f_1}(x, y)-(x,y))dxdy$$
 Here $J_{0}=
 \begin{pmatrix}
   0 & -1 \\
   1 & 0
\end{pmatrix}$ is the Poisson matrix.

Hence the flux homomorphism for the torus $(\mathbb{T}^2,\omega)$
descends to a homomrphism $\rho:
\text{Symp}_{0}(\mathbb{T}^2,\omega)\rightarrow
H^{1}(\mathbb{T}^2;\mathbb{R})/H^{1}(\mathbb{T}^2;\mathbb{Z})$ and
$\Gamma=H^{1}(\mathbb{T}^2;\mathbb{Z})$. 
 \end{lemma}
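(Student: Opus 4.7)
The plan is to compute the cohomology class $\int_0^1 [\lambda_t]\,dt$ on $\mathbb{T}^2$ by extracting its harmonic (constant-coefficient) representative via averaging. Writing $\omega = dx \wedge dy$ and $X_t = u_t\partial_x + v_t\partial_y$, we have $\lambda_t = \iota_{X_t}\omega = u_t\,dy - v_t\,dx$. On the torus every closed $1$-form is cohomologous to its constant representative obtained by averaging each coefficient (only the zero Fourier mode survives in cohomology), so $[\lambda_t] = -\bar v_t\,dx + \bar u_t\,dy$, where $\bar u_t = \int_{\mathbb{T}^2} u_t\,dx\,dy$ and similarly for $\bar v_t$.

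The main computation is to evaluate $\int_0^1 \bar u_t\,dt$ and $\int_0^1 \bar v_t\,dt$. Using the defining relation $X_t \circ f_t = \tfrac{d}{dt} f_t$ together with the area-preservation of $f_t$, I change variables to write
$$
\int_{\mathbb{T}^2} X_t(p)\,dp \;=\; \int_{\mathbb{T}^2} X_t(f_t(q))\,dq.
$$
Under the canonical global trivialization $T\mathbb{T}^2 \cong \mathbb{T}^2 \times \mathbb{R}^2$ coming from the covering $\mathbb{R}^2 \to \mathbb{T}^2$, the integrand equals $\tfrac{d}{dt}\widetilde{f_t}(\tilde q)$ for any lift $\tilde q$, and the result is independent of the lift precisely because the equivariance hypothesis makes $\widetilde{f_t}(x,y) - (x,y)$ a $\mathbb{Z}^2$-periodic $\mathbb{R}^2$-valued function. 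Interchanging the $t$- and space-integrals and invoking the fundamental theorem of calculus with $\widetilde{f_0} = \mathrm{id}$ yields
$$
\int_0^1 \int_{\mathbb{T}^2} X_t\,dx\,dy\,dt \;=\; \int_{\mathbb{T}^2} \bigl(\widetilde{f_1}(x,y) - (x,y)\bigr)\,dx\,dy \;=:\; (a,b),
$$
and reading off components gives $a_2 = a$ and $a_1 = -b$, which is exactly $(a_1, a_2) = J_0(a,b)$.

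For the identification $\Gamma = H^1(\mathbb{T}^2, \mathbb{Z})$, I would apply the formula just proved to closed loops in $\text{Symp}_{0}(\mathbb{T}^2,\omega)$. Any loop $\{f_t\}$ with $f_0 = f_1 = \mathrm{id}$ lifts to a path ending at $\widetilde{f_1}(x,y) = (x+n, y+m)$ for some $(n,m) \in \mathbb{Z}^2$, since a continuous lift of the identity on $\mathbb{T}^2$ is a constant integer translation; its flux is then $J_0(n,m) \in H^1(\mathbb{T}^2, \mathbb{Z})$. Conversely, every $(n,m) \in \mathbb{Z}^2$ is realized by the explicit translation loop $\widetilde{f_t}(x,y) = (x+nt, y+mt)$, so the flux group is exactly $H^1(\mathbb{T}^2, \mathbb{Z})$ and the flux descends to $H^1(\mathbb{T}^2, \mathbb{R})/H^1(\mathbb{T}^2, \mathbb{Z})$. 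The main subtlety throughout is the identification $\tfrac{d}{dt} f_t(q) = \tfrac{d}{dt}\widetilde{f_t}(\tilde q)$ as a vector in $\mathbb{R}^2$ rather than in $T_{f_t(q)}\mathbb{T}^2$; this relies on the flat trivialization of the torus tangent bundle, and the equivariance hypothesis is precisely what makes the answer independent of the choice of lift.
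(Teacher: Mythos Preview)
Your argument is correct and tracks the paper's closely: both use area preservation to rewrite $\int_{\mathbb{T}^2} X_t\,dx\,dy$ as $\int_{\mathbb{T}^2}\frac{d}{dt}\widetilde{f_t}\,dx\,dy$ and then integrate in $t$ via the fundamental theorem of calculus. The one genuine difference is in how the cohomology class $[\lambda_t]$ is extracted. The paper introduces a generating Hamiltonian $H_t$ on $\mathbb{R}^2$, observes that the equivariance hypothesis forces $H_t(x+z_1,y+z_2)=H_t(x,y)+h_1(t)z_1+h_2(t)z_2$, and then reads off $[dH_t]=h_1(t)\,dx+h_2(t)\,dy$ on $\mathbb{T}^2$; your route bypasses $H_t$ entirely and pulls out the harmonic representative of $\lambda_t$ by averaging its coefficients. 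Your version is a little more elementary (no auxiliary function on $\mathbb{R}^2$, no quasi-periodicity claim to justify), while the paper's makes the link to the Hamiltonian formalism more explicit. You also supply the argument for $\Gamma=H^1(\mathbb{T}^2;\mathbb{Z})$ via loops of translations, which the paper states but does not prove.
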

 
 \begin{proof}
   Let $H_t$ be a smooth family of generating Hamiltonians for $f_t$, so that \\
   $\frac{d}{dt}\widetilde{f_t}=-J_{0}\nabla
   H_{t}\circ\widetilde{f_t}$. We claim that there are functions
   $h_{j}(t),\ j=1,2$ such that
$$H_{t}(x+z_{1},y+z_{2})=H_{t}(x,y)+h_{1}(t)z_{1}+h_{2}(t)z_{2}$$
for $0\leq t\leq1$. To see this, define $h_{j}(t)$ so that the
equation holds when $(z_{1},z_{2})$ is the 1,2 th vector of the
standard basis, and then use linearity with respect to
$(z_{1},z_{2})$. Hence  
\begin{eqnarray*}
   a&=&J_{0}\int_{\mathbb{T}^2}(\widetilde{f_1}(x,y)-(x,y)) dxdy \\
     &=&J_{0}\int^{1}_{0}\int_{\mathbb{T}^2}\frac{d}{dt}\widetilde{f_t}(x,y) dx dy dt \\
     &=&\int^{1}_{0}\int_{\mathbb{T}^2}\nabla H_{t}\circ\widetilde{f_t}(x,y) dx dy dt \\
     &=&\int^{1}_{0}\int_{\mathbb{T}^2}\nabla H_{t}(x,y) dx dy dt \\
     &=&\int^{1}_{0}h(t)dt
\end{eqnarray*}
Furthermore, although the functions $H_{t}$ are not defined on
$\mathbb{T}^2$, both $dH_{t}$ and $X_{H_{t}}$ descend to
$\mathbb{T}^2$. Thus we find  
\begin{eqnarray*}
flux(\{f_{t}\})&=&\int^{1}_{0}[\iota(X_{H_{t}})dxdy]dt \\
                       &=&\int^{1}_{0}[dH_{t}]dt \\
                       &=&\int^{1}_{0}h_{1}(t)dtdx+\int^{1}_{0}h_{2}(t)dtdy \\
                       &=&a_{1}dx+a_{2}dy
\end{eqnarray*}
\end{proof}
  
\begin{definition}
 A symplectomorphism $f$ of the torus is called {\em exact}\/ if it admits a
 lift $\widetilde{f}$ such that  
 $$\widetilde{f}(x+z_{1},y+z_{2})=\widetilde{f}(x,y)+(z_{1},z_{2}),\ \
 \int_{{T}^{2}}(\widetilde{f}(x,y)-(x,y))dxdy=0.$$ 
\end{definition}

We remark that, the condition on the lift of $f$ on
$\mathbb{T}^2$ implies that $f$ is isotopic to identity.

 Now we are ready to prove the main result.
 
 \textbf{Proof of theorem 1.3}

 Let $f$ be a symplectic diffeomorphism on $\mathbb{T}^2$,
 isotopic to identity. Hamiltonian diffeomorphism is
characterized by the fact that there exists a lift $\widetilde{f}$
such that 
$$\int_{\mathbb{T}^2}(\widetilde{f}(x,y)-(x,y))dxdy=0.$$

According to lemma 3.2, we may assume, by making a small perturbation,
that $f$ has a rational flux vector. That is,
 $$V_{f}=(F_{f}(a_1),F_{f}(b_1))=(\frac{p_1}{q},\frac{p_2}{q}) \text{
   for some }p_1,p_2,q\in\mathbb{Z}.$$ 
 
 The flux vector of $f$ is computed in terms of a lift $\widetilde{f}$:
$$V_{\widetilde{f}}=(\int_{\mathbb{T}^2}\widetilde{\alpha}(x,y)dxdy,
\int_{\mathbb{T}^2}\widetilde{\beta}(x,y)dxdy)$$ 
and $V_{f}=V_{\widetilde{f}}\ \text{\ mod 1}.$

Hence we can write
$V_{\widetilde{f}}=(\frac{p_1}{q},\frac{p_2}{q})$. Recall that the
flux vector has an additive property,
$F_{\widetilde{f}^k}=kF_{\widetilde{f}}$ for all $k\in\mathbb{Z}$, so
we have
$V_{\widetilde{f}^q}=qV_{\widetilde{f}}=(p_1,p_2)\in\mathbb{Z}^2$. 

Define
$\widetilde{g}: \mathbb{R}^2\rightarrow\mathbb{R}^2, (x,y)\mapsto
\widetilde{f^q}-(p_1,p_2)$.  Then we have 
$$\int_{\mathbb{T}^2}(\widetilde{g}(x,y)-(x,y))dxdy=0.$$ 

It is easy to see that $\widetilde{g}$ is a lift of diffeomorphism
$g:\mathbb{T}^2\rightarrow\mathbb{T}^2$ and $g$ is a Hamiltonian
diffeomorphism. Notice that $f^q=g$ on $\mathbb{T}^2$. According to
Asaoka-Irie's result (Theorem \ref{thm2.1}), for any nonempty open set
$U$, there exists a sequence $(g_{j})_{j\geqslant1}\rightarrow g$ such
that $\mathcal{P}(g_j)\cap U\neq\emptyset$ and
$supp(g^{-1}\circ g_{j})\subset U$.

This implies that given a symplectic diffeomorphism $f$ on
$\mathbb{T}^2$, with rotation vector $(\frac{p_1}{q}, \frac{p_2}{q})$,
there is a sequence of symplectic diffeomorphisms $(g_{j})_{j \geqslant
  1}$, such that
$(\widetilde{g}_{j})_{j\geqslant1}\rightarrow\widetilde{f}^q-(p_1,p_2)$,
and $(g_{j})_{j\geqslant1}\rightarrow f^{q}$ such that there are
periodic points in $U$. In other words, we can perturb
$f^q$ to create periodic points in any prescribed open set $U$.

However, the above argument does not prove our theorem. We need to
perturb $f$, instead of $f^q$, to creat periodic points. The problem
is that there may not be any map, say $f_j$, such that $f_j^q = g_j$.
 
To solve this problem, we apply the more general perturbation lemma,
Theorem \ref{thm2.3}, instead of the original theorem of Asaoka-Irie.

For any $U_0 \subset M$, suppose $f$ no periodic points in $U_0$.
Denote  $U_{1}=f^{-1}(U_{0})$, ..., $U_{q-1}=f^{-1}(U_{q-2})$. By
shrinking $U_0$, we may assume that $U_0, U_1, \ldots, U_{q-1}$ are
disjoint, $U_{i} \cap U_{j} = \emptyset$ for $i\neq j$. Let
$$U = U_0 \cup  U_1 \cup \ldots \cup U_{q-1}$$

For $t \in [0, 1]$, let $h_{t}: U_0 \rightarrow U_0$ be a family of
area-preserving diffeomorphism such that $h_{0} = \mbox{id}$ and total
action is nonzero at $t =1$, or,  $A(h_{1}) \neq 0$, as the example we
constructed in Section \ref{section3}. Consider the map
$$h_{t}\circ f: M \rightarrow M.$$
We have
\begin{itemize}
  \item For all $t \in [0, 1]$, $(h_{t} \circ f)^q \equiv f$ for all $f
    \in M \backslash (U_0 \cup U_1 \cup \ldots \cup U_{q-1}) = M
    \backslash U$.

    \item 
      $(h_{0}\circ f)^q = f^q $
      
    \item The total action of map $$(h_{1}\circ f)^q \circ f^{-q}: U
      \rightarrow U$$ is
$$A((h_{1}\circ f)^q \circ f^{-q}) = q
      A(h_{1}) \neq 0.$$
\end{itemize}

By Theorem \ref{thm2.3}, there is a $t \in [0, 1]$ such that
$$(h_{t}\circ f)^q: M \rightarrow M.$$
has a periodic point in $U$, which implies that
$$h_{t}\circ f: M \rightarrow M.$$ has a periodic point in $U_0$.

This proves the theorem.


\begin{bibdiv}
\begin{biblist}

\bib{Anosov1962}{article}{
      author={Anosov, D.~V.},
       title={Roughness of geodesic flows on compact {R}iemannian manifolds of
  negative curvature},
        date={1962},
        ISSN={0002-3264},
     journal={Dokl. Akad. Nauk SSSR},
      volume={145},
       pages={707\ndash 709},
      review={\MR{0143156}},
}

\bib{AnosovZhuzhoma2012}{article}{
      author={Anosov, D.~V.},
      author={Zhuzhoma, E.~V.},
       title={Closing lemmas},
        date={2012},
        ISSN={0012-2661},
     journal={Differ. Equ.},
      volume={48},
      number={13},
       pages={1653\ndash 1699},
         url={https://doi.org/10.1134/S0012266112130010},
      review={\MR{3188568}},
}

\bib{Irie2016}{article}{
      author={Asaoka, Masayuki},
      author={Irie, Kei},
       title={A {$C^\infty$} closing lemma for {H}amiltonian diffeomorphisms of
  closed surfaces},
        date={2016},
        ISSN={1016-443X},
     journal={Geom. Funct. Anal.},
      volume={26},
      number={5},
       pages={1245\ndash 1254},
         url={https://doi.org/10.1007/s00039-016-0386-3},
      review={\MR{3568031}},
}

\bib{Banyaga1978}{article}{
      author={Banyaga, Augustin},
       title={Sur la structure du groupe des diff\'{e}omorphismes qui
  pr\'{e}servent une forme symplectique},
        date={1978},
        ISSN={0010-2571},
     journal={Comment. Math. Helv.},
      volume={53},
      number={2},
       pages={174\ndash 227},
         url={https://doi.org/10.1007/BF02566074},
      review={\MR{490874}},
}

\bib{Calabi1970}{incollection}{
      author={Calabi, Eugenio},
       title={On the group of automorphisms of a symplectic manifold},
        date={1970},
   booktitle={Problems in analysis ({L}ectures at the {S}ympos. in honor of
  {S}alomon {B}ochner, {P}rinceton {U}niv., {P}rinceton, {N}.{J}., 1969)},
       pages={1\ndash 26},
      review={\MR{0350776}},
}

\bib{Hutchings2015}{article}{
      author={Cristofaro-Gardiner, Daniel},
      author={Hutchings, Michael},
      author={Ramos, Vinicius Gripp~Barros},
       title={The asymptotics of {ECH} capacities},
        date={2015},
        ISSN={0020-9910},
     journal={Invent. Math.},
      volume={199},
      number={1},
       pages={187\ndash 214},
         url={https://doi.org/10.1007/s00222-014-0510-7},
      review={\MR{3294959}},
}

\bib{Hutchings2014}{incollection}{
      author={Hutchings, Michael},
       title={Lecture notes on embedded contact homology},
        date={2014},
   booktitle={Contact and symplectic topology},
      series={Bolyai Soc. Math. Stud.},
      volume={26},
   publisher={J\'{a}nos Bolyai Math. Soc., Budapest},
       pages={389\ndash 484},
         url={https://doi.org/10.1007/978-3-319-02036-5_9},
      review={\MR{3220947}},
}

\bib{Hutchings2016}{article}{
      author={Hutchings, Michael},
       title={Mean action and the {C}alabi invariant},
        date={2016},
        ISSN={1930-5311},
     journal={J. Mod. Dyn.},
      volume={10},
       pages={511\ndash 539},
         url={https://doi.org/10.3934/jmd.2016.10.511},
      review={\MR{3570996}},
}

\bib{Irie2015}{article}{
      author={Irie, Kei},
       title={Dense existence of periodic {R}eeb orbits and {ECH} spectral
  invariants},
        date={2015},
        ISSN={1930-5311},
     journal={J. Mod. Dyn.},
      volume={9},
       pages={357\ndash 363},
         url={https://doi.org/10.3934/jmd.2015.9.357},
      review={\MR{3436746}},
}

\bib{Liao1979}{article}{
      author={Liao, Shan~Tao},
       title={An extension of the {$C^{1}$} closing lemma},
        date={1979},
        ISSN={0023-074X},
     journal={Kexue Tongbao},
      volume={24},
      number={19},
       pages={865\ndash 868},
      review={\MR{556294}},
}

\bib{Pesin1977}{article}{
      author={Pesin, Ja.~B.},
       title={Characteristic {L}japunov exponents, and smooth ergodic theory},
        date={1977},
        ISSN={0042-1316},
     journal={Uspehi Mat. Nauk},
      volume={32},
      number={4 (196)},
       pages={55\ndash 112, 287},
      review={\MR{0466791}},
}

\bib{Poincare19931}{book}{
      author={Poincar\'{e}, Henri},
       title={New methods of celestial mechanics. {V}ol. 1},
      series={History of Modern Physics and Astronomy},
   publisher={American Institute of Physics, New York},
        date={1993},
      volume={13},
        ISBN={1-56396-114-8},
        note={Periodic and asymptotic solutions, Translated from the French,
  Revised reprint of the 1967 English translation, With endnotes by V. I.
  Arnol\cprime d, Edited and with an introduction by Daniel L. Goroff},
      review={\MR{1194622}},
}

\bib{Pugh1967}{article}{
      author={Pugh, Charles~C.},
       title={The closing lemma},
        date={1967},
        ISSN={0002-9327},
     journal={Amer. J. Math.},
      volume={89},
       pages={956\ndash 1009},
         url={https://doi.org/10.2307/2373413},
      review={\MR{226669}},
}

\bib{Pugh1983}{article}{
      author={Pugh, Charles~C.},
      author={Robinson, Clark},
       title={The {$C^{1}$} closing lemma, including {H}amiltonians},
        date={1983},
        ISSN={0143-3857},
     journal={Ergodic Theory Dynam. Systems},
      volume={3},
      number={2},
       pages={261\ndash 313},
         url={https://doi.org/10.1017/S0143385700001978},
      review={\MR{742228}},
}

\bib{RaduXia2006}{incollection}{
      author={Saghin, Radu},
      author={Xia, Zhihong},
       title={Generic properties of symplectic diffeomorphisms},
        date={2006},
   booktitle={Nonlinear dynamics and evolution equations},
      series={Fields Inst. Commun.},
      volume={48},
   publisher={Amer. Math. Soc., Providence, RI},
       pages={247\ndash 255},
         url={https://doi.org/10.1090/s0002-9947-06-04171-7},
      review={\MR{2223355}},
}

\bib{Smale1998}{article}{
      author={Smale, Steve},
       title={Mathematical problems for the next century},
        date={1998},
        ISSN={0343-6993},
     journal={Math. Intelligencer},
      volume={20},
      number={2},
       pages={7\ndash 15},
         url={https://doi.org/10.1007/BF03025291},
      review={\MR{1631413}},
}

\bib{Weinstein1971}{article}{
      author={Weinstein, Alan},
       title={Symplectic manifolds and their {L}agrangian submanifolds},
        date={1971},
        ISSN={0001-8708},
     journal={Advances in Math.},
      volume={6},
       pages={329\ndash 346 (1971)},
         url={https://doi.org/10.1016/0001-8708(71)90020-X},
      review={\MR{286137}},
}

\bib{XiaZhang2006}{article}{
      author={Xia, Zhihong},
      author={Zhang, Hua},
       title={A {$C^r$} closing lemma for a class of symplectic
  diffeomorphisms},
        date={2006},
        ISSN={0951-7715},
     journal={Nonlinearity},
      volume={19},
      number={2},
       pages={511\ndash 516},
         url={https://doi.org/10.1088/0951-7715/19/2/015},
      review={\MR{2199401}},
}

\end{biblist}
\end{bibdiv}

\end{document}